\newtheorem{thm}{Theorem}[section]
\newtheorem{cor}[thm]{Corollary}
\newtheorem{lem}[thm]{Lemma}
\newtheorem{prop}[thm]{Proposition}
\newtheorem{quest}[thm]{Question}
\theoremstyle{definition}
\theoremstyle{property}
\theoremstyle{remark}
\newtheorem{rem}[thm]{Remark}
\numberwithin{equation}{section}
\definecolor{ceruleanblue}{rgb}{0.16, 0.32, 0.75}
\begin{document}

\title[The heredity and bimeromorphic invariance]{The heredity and bimeromorphic invariance of the  $\partial\bar{\partial}$-lemma property}

\author{Lingxu Meng}
\address{Department of Mathematics, North University of China, Taiyuan, Shanxi 030051,  P. R. China}
\email{menglingxu@nuc.edu.cn}%
\date{\today}

\subjclass[2010]{32Q99}
\keywords{$\partial\bar{\partial}$-lemma; bimeromorphic invariance; heredity}


\begin{abstract}
  We give a simple proof of a result on the $\partial\bar{\partial}$-lemma property under a blow-up transformation by Deligne--Griffiths--Morgan--Sullivan's criterion.
  Here, we use an explicit blow-up formula for Dolbeault cohomology given in our previous work, which can be induced by a morphism expressed on the level of spaces of forms and currents.
  At last, we discuss the heredity and bimeromorphic invariance of the  $\partial\bar{\partial}$-lemma property.
\end{abstract}

\maketitle


%
\section{Introduction}
In non-K\"ahler geometry, the heredity and bimeromorphic invariance of the $\partial\bar{\partial}$-lemma property are two interesting problems, extensively studied in \cite{A, ASTT, DGMS, F, RYY, St1, St2, YY} especially in the recent days. The \emph{$\partial\bar{\partial}$-lemma} on a compact complex manifold $X$ refers
to that for every pure-type $d$-closed form on $X$, the properties of
$d$-exactness, $\partial$-exactness, $\bar{\partial}$-exactness and
$\partial\bar{\partial}$-exactness are equivalent while a compact complex manifold is called a \emph{$\partial\bar{\partial}$-manifold} if the \emph{$\partial\bar{\partial}$-lemma} holds on it.
\begin{quest}[Heredity]\label{q-1}
Does any closed complex  submanifold of an $n$-dimensional $\partial\bar{\partial}$-manifold still satisfy the $\partial\bar{\partial}$-lemma?
\end{quest}
\begin{quest}[Bimeromorphic invariance]\label{q-2}
Does any compact complex manifold being bimeromorphic to an $n$-dimensional $\partial\bar{\partial}$-manifold  satisfy the  $\partial\bar{\partial}$-lemma?
\end{quest}

Clearly, the heredity is true for the $\partial\bar{\partial}$-manifolds of dimensions $\leq 2$.
Suppose that $\widetilde{X}$ is a modification of a compact complex manifold $X$.
A. Parshin \cite{P} and P. Deligne, Ph. Griffiths, J. Morgan, D. Sullivan \cite{DGMS} proved that if $\widetilde{X}$ is a $\partial\bar{\partial}$-manifold, then so is $X$.
L. Alessandrini \cite{A} posed a question in its inverse direction: if $X$  satisfies the $\partial\bar{\partial}$-lemma, so does  $\widetilde{X}$?
We can easily  prove that, Question 1.2 is equivalent to Alessandrini's one.
It is true on complex surfaces by the classical results that each compact complex surface with even first Betti number is K\"ahler (see \cite{buch,lam} for a uniform proof) and the first Betti number is a bimeromorphic invariant, while the case of threefolds was first proved by S. Rao, S. Yang,  X.-D. Yang \cite{RYY} using a  Dolbeault blow-up formula  and S. Yang,  X.-D. Yang \cite{YY} using a Bott-Chern blow-up formula. The general case is still open. For any nonnegative integer $k\leq n$, we weaken Question \ref{q-1} as
\begin{quest}[Heredity for codimension $\geq k$]\label{q-3}
Does any closed  complex submanifold of codimension $\geq k$ of an $n$-dimensional $\partial\bar{\partial}$-manifolds still satisfy the $\partial\bar{\partial}$-lemma?
\end{quest}
For convenience, Questions \ref{q-1}-\ref{q-3} are denoted by $(\textrm{$H_n$})$, (\textrm{$B_n$}) and $(\textrm{$H_{n,k}$})$, respectively. Obviously, $(\textrm{$H_n$})=(\textrm{$H_{n,0}$})\Leftrightarrow(\textrm{$H_{n,1}$})$ and if $k_1\leq k_2$, then $(\textrm{$H_{n,k_1}$})\Rightarrow(\textrm{$H_{n,k_2}$})$.

P. Deligne \emph{et al.} \cite[(5.21)]{DGMS} gave an important  result, which related the $\partial\bar{\partial}$-lemma property  with Hodge filtration and the degeneracy of the Fr\"{o}licher spectral sequence at $E_1$-page.
S. Rao, S. Yang and X.-D. Yang \cite[Theorem 1.6]{RYY} investigated the bimeromorphic invariance of the degeneracy of Fr\"{o}licher spectral sequence at $E_1$ by their Dolbeault blow-up formula and pointed out that these results are applicable to Question \ref{q-2} in the remarks after \cite[Question 1.2]{RYY}. Subsequently,  their \cite[Theorem 1.2]{RYY2} gave an explicit expression of the isomorphism between Dolbeault cohomologies in the blow-up formula  to implicitly obtain $(\textbf{$B_n$})\Leftrightarrow(\textrm{$H_{n,2}$})$ via Proposition \ref{blowup0} indeed.
D. Angella, T. Suwa, N. Tardini and A. Tomassini \cite[Theorem 13, Questions 22-24]{ASTT} also studied this equivalence by the \u{C}ech-Dolbeault cohomology with  additional hypotheses and generalized their results to compact complex orbifolds.
In his PhD thesis, by Angella--Tomassini's characterization \cite[Theorems A and B]{AT},  J. Stelzig \cite[Corollary F]{St1} claimed that the $\partial\bar{\partial}$-lemma property is a bimeromorphic
invariant of compact complex manifolds if and only if every submanifold
of a $\partial\bar{\partial}$-manifold is again a $\partial\bar{\partial}$-manifold.
Inspired by them, we will prove the following theorem.
\begin{thm}\label{F}
For any integer $k\in\{1, 2,\cdots,n\}$, there holds the implication hierarchy
$$(\textrm{$B_{n+k}$})\Rightarrow(\textrm{$H_{n+k,k+1}$})\Rightarrow(\textrm{$H_{n}$}).$$
Moreover, $(\textrm{$H_{n,2}$})\Rightarrow (\textbf{$B_n$})$.
\end{thm}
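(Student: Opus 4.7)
The strategy combines the blow-up formula (Proposition~\ref{blow-up-lemma}), which decomposes each classical cohomology of $\widetilde{X}=\Bl_Y X$ as the corresponding cohomology of $X$ together with $r-1$ shifted copies of that of $Y$ (where $r=\mathrm{codim}_X Y$), with the Angella--Tomassini characterization of the $\partial\bar\partial$-property: for any compact complex manifold $M$ the gap
\[
\Delta_k(M):=\sum_{p+q=k}\bigl(h_{BC}^{p,q}(M)+h_A^{p,q}(M)\bigr)-2b_k(M)
\]
is non-negative, with $M$ being a $\partial\bar\partial$-manifold if and only if $\Delta_k(M)=0$ for every $k$. The blow-up formula immediately yields the additivity $\Delta_k(\widetilde{X})=\Delta_k(X)+\sum_{i=1}^{r-1}\Delta_{k-2i}(Y)$, so each summand must vanish independently whenever $\Delta_k(\widetilde{X})=0$.

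For $(\textrm{$B_{n+k}$})\Rightarrow(\textrm{$H_{n+k,k+1}$})$: given $Y\subset X$ of codimension $\geq k+1\geq 2$ with $X$ an $(n+k)$-dimensional $\partial\bar\partial$-manifold, I would form $\widetilde{X}=\Bl_Y X$. Since $\widetilde{X}$ is bimeromorphic to $X$, $(\textrm{$B_{n+k}$})$ yields $\Delta_k(\widetilde{X})=0$; combined with $\Delta_k(X)=0$, the additivity forces $\Delta_j(Y)=0$ for all $j$, so $Y$ is a $\partial\bar\partial$-manifold. For $(\textrm{$H_{n+k,k+1}$})\Rightarrow(\textrm{$H_{n}$})$: given a closed submanifold $Y$ of an $n$-dimensional $\partial\bar\partial$-manifold $X$, I may assume $Y\neq X$ so that $\dim Y\leq n-1$; picking any $k$-dimensional $\partial\bar\partial$-manifold $Z$ (e.g.\ $\mathbb{CP}^k$), the product $X\times Z$ is an $(n+k)$-dimensional $\partial\bar\partial$-manifold by K\"unneth. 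The embedding $Y\hookrightarrow X\times\{z_0\}\subset X\times Z$ has codimension $(n+k)-\dim Y\geq k+1$, and $(\textrm{$H_{n+k,k+1}$})$ applies.

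For $(\textrm{$H_{n,2}$})\Rightarrow(\textrm{$B_{n}$})$, let $X'$ be bimeromorphic to the $\partial\bar\partial$-manifold $X$. By Hironaka's theorem there is a compact complex manifold $\widetilde{X}$ with holomorphic bimeromorphic maps $\widetilde{X}\to X$ and $\widetilde{X}\to X'$, each a finite composition of blow-ups along smooth centres of codimension $\geq 2$. Going up from $X$ to $\widetilde{X}$: at each step $X_{j+1}=\Bl_{Y_j}X_j$, the centre $Y_j$ has codimension $\geq 2$ in the $\partial\bar\partial$-manifold $X_j$, so $(\textrm{$H_{n,2}$})$ makes $Y_j$ a $\partial\bar\partial$-manifold, and additivity then gives $\Delta_k(X_{j+1})=0$. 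Going down from $\widetilde{X}$ to $X'$: from $\Delta_k(X'_{j+1})=0$ and the non-negativity of every term in $\Delta_k(X'_{j+1})=\Delta_k(X'_j)+\sum_{i\geq 1}\Delta_{k-2i}(Y'_j)$, one concludes $\Delta_k(X'_j)=0$, and by induction $X'$ is $\partial\bar\partial$. The main obstacle is precisely this downward step---deducing the $\partial\bar\partial$-property of a base $X'_j$ from that of its blow-up $X'_{j+1}$ with no a priori control on the centre $Y'_j$---which is made possible only by the pointwise non-negativity of the gap function and the clean diagonal shape of the decomposition in Proposition~\ref{blow-up-lemma}.
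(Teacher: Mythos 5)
Your first two implications are essentially the paper's argument. $(B_{n+k})\Rightarrow(H_{n+k,k+1})$ is exactly the intended ``direct corollary'' of Proposition \ref{blow-up-lemma}: blow up along the submanifold and read off the $\partial\overline{\partial}$-property of the centre from the additivity and non-negativity of the gaps $\Delta^k$. For $(H_{n+k,k+1})\Rightarrow(H_{n})$ the paper realizes $X\times\mathbb{C}P^k$ as the projectivization of the trivial bundle $X\times\mathbb{C}^{k+1}$, so that Proposition \ref{Proj-bundle-lemma} gives the $\partial\overline{\partial}$-property of the ambient product with no extra input; your route via ``$X\times Z$ is $\partial\overline{\partial}$ by K\"unneth'' is also correct (it follows from the characterization (\ref{1})--(\ref{2}) together with the de Rham and Dolbeault K\"unneth formulas), but it imports a fact the paper does not establish, whereas specializing $Z=\mathbb{C}P^k$ lets the already-proved projective-bundle proposition do the work.

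The genuine gap is in $(H_{n,2})\Rightarrow(B_n)$. Hironaka-type elimination of indeterminacy produces a compact manifold $\widetilde{X}$ with $\widetilde{X}\to X$ a composition of blow-ups along smooth centres and $\widetilde{X}\to X'$ a holomorphic bimeromorphic map, but it does \emph{not} make the second leg a composition of smooth blow-ups; the existence of such a common roof with both legs factored into smooth blow-ups is the strong factorization conjecture, which is open. Your downward induction needs every step $X'_{j+1}\to X'_j$ to be a blow-up along a smooth centre in order to invoke the additivity of $\Delta^k$, so as written the proof rests on an unproved statement. The paper instead cites the weak factorization theorem \cite[Theorem 0.3.1]{AKMW}, which factors the bimeromorphic map $X\dashrightarrow X'$ into a zigzag of blow-ups and blow-downs with smooth centres through compact complex manifolds of the same dimension $n$. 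Your propagation argument transfers verbatim to this zigzag: at an ascending step the centre sits inside a manifold already known to be a $\partial\overline{\partial}$-manifold, so $(H_{n,2})$ applies and Proposition \ref{blow-up-lemma} passes the property up; at a descending step the ``only if'' direction of Proposition \ref{blow-up-lemma} passes it down with no information about the centre. So the gap is reparable, but the tool must be weak factorization, not Hironaka.
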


\subsection*{Acknowledgements}
 The author would like to express his sincere gratitude to  Prof. Sheng Rao for explaining the details in their original manuscript on $(\textbf{$B_n$})\Leftrightarrow(\textrm{$H_{n,2}$})$.
 The author is supported by the National Natural Science Foundation of China (Grant No. 12001500, 12071444) and the Natural Science Foundation of Shanxi Province of China (Grant No. 201901D111141).


\section{Preliminaries}
\subsection{A criterion on the $\partial\bar{\partial}$-lemma}
For a compact complex manifold $X$, a natural filtration on the complex $A^\bullet(X)_{\mathbb{C}}$ of $\mathbb{C}$-valued smooth forms on $X$ is defined  as
\begin{displaymath}
F^pA^k(X)_{\mathbb{C}}=\bigoplus\limits_{\substack{r+s=k\\r\geq p}}A^{r,s}(X),
\end{displaymath}
for all $k$, $p$, which  give a spectral sequence $(E_r^{p,q}, F^p H^k(X,\mathbb{C}))$, namely, the \emph{Fr\"{o}licher spectral sequence} of $X$. Then $E_1^{p,q}=H^{p,q}_{\bar{\partial}}(X)$ and
\begin{equation}\label{rep}
F^p H^k(X,\mathbb{C})=\{[\alpha]\in H^k(X,\mathbb{C})|\alpha\in F^pA^k(X)\mbox{ and }d\alpha=0\}.
\end{equation}
Clearly, $F^p H^k(X,\mathbb{C})=0$ for $p<0$ or $p>k$. For convenience, we call $F^\bullet H^k(X,\mathbb{C})$ the \emph{Hodge filtration} on $H^k(X,\mathbb{C})$. Set $V^{p,q}(X)=F^pH^k(X, \mathbb{C})\cap\overline{F}^qH^k(X, \mathbb{C})$ for $p+q=k$, where $\overline{F}^qH^k(X, \mathbb{C})$ is the complex conjugation of the complex subspace $F^qH^k(X, \mathbb{C})$ in $H^k(X, \mathbb{C})$. We say that \emph{the Hodge filtration  gives a Hodge structure of weight $k$ on $H^k(X,\mathbb{C})$}, if
\begin{equation}\label{Hodge decom}
H^k(X, \mathbb{C})=\bigoplus_{p+q=k}V^{p,q}(X),
\end{equation}
and
\begin{equation}\label{sym}
\overline{V^{p,q}(X)}=V^{q,p}(X), \mbox{ for any }p+q=k.
\end{equation}

P. Deligne, Ph. Griffiths, J. Morgan and D. Sullivan established the well-known criterion on the $\partial\bar{\partial}$-lemma as follows.

\begin{thm}[{\cite[(5.21)]{DGMS}}]\label{DGMS}
For a compact complex manifold $X$, the following statements are equivalent:

$(1)$ $X$ satisfies the $\partial\bar{\partial}$-lemma.

$(2)$ $(a)$ The Fr\"{o}licher spectral sequence of $X$ degenerates at $E_1$, and

\quad \mbox{ }\mbox{ }$(b)$ the Hodge filtration  gives a Hodge structure of weight $k$ on $H^k(X,\mathbb{C})$, for every $k\geq 0$.
\end{thm}

\begin{rem}\label{Fro-rem}
For a compact complex manifold $X$, denote  by $b_k(X)$, $h^{p,q}(X)$ the $k$-th Betti, $(p,q)$-th Hodge numbers respectively.

$(1)$ In general, $b_k(X)\leq\sum\limits_{\substack{p+q=k}}h^{p,q}(X)$ for all $k$.

$(2)$ The statement of Theorem \ref{DGMS} $(2)(a)$ is equivalent to that $F^p H^k(X,\mathbb{C})/F^{p+1} H^k(X,\mathbb{C})\cong H^{p,k-p}_{\bar{\partial}}(X)$ for all $k$, $p$, and hence is equivalent to that $b_k(X)=\sum\limits_{\substack{p+q=k}}h^{p,q}(X)$ for all $k$.
\end{rem}

We refer to \cite[Sect. 1.5]{ASTT} and \cite[Sect. 2.3]{RZ} for more discussions on the Fr\"{o}licher spectral sequence and the Hodge structure.

\subsection{Some notations}
Assume that $X$ is a complex manifold with complex dimension $n$.
Denote by $\mathcal{D}^{\prime p,q}(X)$ the space of $(p,q)$-currents  on $X$, which is defined as the dual of the topological vector space $A^{n-q,n-q}(X)$ equipped with its natural topology.
The operators $\partial$ and $\bar{\partial}$ on $A^{\bullet,\bullet}(X)$ naturally induce two differentials $\partial$ and $\bar{\partial}$ on $\mathcal{D}^{\prime \bullet,\bullet}(X)$. 
Evidently, $(A^{\bullet,\bullet}(X), \partial, \bar{\partial})$ and $(\mathcal{D}^{\prime \bullet,\bullet}(X), \partial, \bar{\partial})$ are both double complexes. 
Denote by $H^{q}(\mathcal{D}^{\prime p,\bullet}(X))$ the $q$-th cohomology of the complex $(\mathcal{D}^{\prime p,\bullet}(X), \bar{\partial})$.
The natural inclusion $A^{p,\bullet}(X)\hookrightarrow\mathcal{D}^{\prime p,\bullet}(X)$  induces an isomorphism $\rho_X :H_{\bar{\partial}}^{p,q}(X)\tilde{\rightarrow} H^{q}(\mathcal{D}^{\prime p,\bullet}(X))$.

Let $f:X\rightarrow Y$ be a proper holomorphic map between complex manifolds.
Set $r=\textrm{dim}_{\mathbb{C}}X-\textrm{dim}_{\mathbb{C}}Y$.
The pushforward $f_*:\mathcal{D}^{\prime \bullet,\bullet}(X)\rightarrow \mathcal{D}^{\prime \bullet-r,\bullet-r}(Y)$ of the currents defines a morphism $f_*:H^{q}(\mathcal{D}^{\prime p,\bullet}(X))\rightarrow H^{q-r}(\mathcal{D}^{\prime p-r,\bullet}(Y))$ for any $p$, $q$. 
For convenience, we also denote by $f_*$ the morphism $\rho_Y\circ f_*\circ \rho_X^{-1}:H_{\bar{\partial}}^{p,q}(X)\rightarrow H_{\bar{\partial}}^{p-r,q-r}(Y)$.

\section{The Hodge structures on blow-ups and projective bundles}
\subsection{Blow-up cases}
Let $\pi:\widetilde{X}\rightarrow X$ be the blow-up of  a compact complex manifold $X$ along  a complex submanifold $Y$ and  $E$ the exceptional divisor.
Set $r=\textrm{codim}_{\mathbb{C}}Y\geq 2$ and assume that $i_E:E\rightarrow \widetilde{X}$ is the inclusion.
Let $t\in \mathcal{A}^{1,1}(E)$ be a Chern form of the universal line bundle $\mathcal{O}_{E}(-1)$ on $E={\mathbb{P}(N_{Y/X})}$.
Define a double complex
\begin{displaymath}
K^{\bullet,\bullet}=A^{\bullet,\bullet}(X)\oplus \bigoplus_{i=1}^{r-1}A^{\bullet-i,\bullet-i}(Y).
\end{displaymath}
and a morphism of bounded double complexes
\begin{displaymath}
\psi:K^{\bullet,\bullet}\rightarrow \mathcal{D}^{\prime \bullet,\bullet}(\widetilde{X})
\end{displaymath}
as
\begin{displaymath}
(\alpha, \beta^1,\ldots,\beta^{r-1})\mapsto\pi^*\alpha+\sum_{i=1}^{r-1}i_{E*}\left(t^{i-1}\wedge(\pi|_E)^*\beta^i\right),
\end{displaymath}
where $\alpha\in A^{\bullet,\bullet}(X)$ and $\beta^i\in A^{\bullet-i,\bullet-i}(Y)$.
By \cite[Theorem 1.2]{M}, $\psi$ induces an isomorphism
\begin{equation}\label{blowup1}
H_{\bar{\partial}}^{\bullet,\bullet}(X)\oplus \bigoplus_{i=1}^{r-1}H_{\bar{\partial}}^{\bullet-i,\bullet-i}(Y)\tilde{\rightarrow} H_{\bar{\partial}}^{\bullet,\bullet}(\widetilde{X}),
\end{equation}
i.e., the isomorphism on $E_1$-pages between   the spectral sequences associated to $K^{\bullet,\bullet}$ and $\mathcal{D}^{\prime \bullet,\bullet}(\widetilde{X})$.
Hence  $\psi$     induces an isomorphism $H^k(X,\mathbb{C})\oplus\bigoplus\limits_{i=1}^{r-1}H^{k-2i}(Y,\mathbb{C})\tilde{\rightarrow }H^k(\widetilde{X},\mathbb{C})$
with the isomorphism on the Hodge filtrations
\begin{equation}\label{blowup2}
F^\bullet H^k(X,\mathbb{C})\oplus\bigoplus_{i=1}^{r-1}F^{\bullet-i}H^{k-2i}(Y,\mathbb{C})\tilde{\rightarrow }F^\bullet H^k(\widetilde{X},\mathbb{C})
\end{equation}
for any $k$.
Moreover, $\psi$   induces an isomorphism
\begin{displaymath}
V^{p,q}(X)\oplus\bigoplus_{i=1}^{r-1}V^{p-i,q-i}(Y)\tilde{\rightarrow} V^{p,q}(\widetilde{X})
\end{displaymath}
for any $p$, $q$.

\begin{lem}\label{F-H-bu}
For a given $k$, the Hodge filtration  gives a Hodge structure of weight $k$  on $H^k(\widetilde{X},\mathbb{C})$, if and only if, the Hodge filtrations  give a Hodge structure of weight $k$  on $H^k(X,\mathbb{C})$ and a Hodge structure of weight $k-2i$  on $H^{k-2i}(Y,\mathbb{C})$ for all $1\leq i\leq r-1$.
\end{lem}

By (\ref{blowup1}), (\ref{blowup2}) and Remark \ref{Fro-rem}, we easily obtain
\begin{lem}[{\cite[Theorem 1.6]{RYY}}]\label{E-bu}
The Fr\"{o}licher spectral sequence of $\widetilde{X}$ degenerates at $E_1$, if and only if, so do those of $X$ and $Y$.
\end{lem}

Combining Lemmas  \ref{F-H-bu}, \ref{E-bu} and Theorem \ref{DGMS}, we get
\begin{prop}\label{blowup0}
Let  $\widetilde{X}$ be the blow-up of a  compact complex manifold $X$ along a complex submanifold $Y$ of complex codimension $\geq 2$. Then $\widetilde{X}$ satisfies the  $\partial\bar{\partial}$-lemma, if and only if, $X$ and $Y$ do.
\end{prop}

\begin{rem}
S. Rao, S. Yang, X.-D. Yang \cite[Theorem 1.6]{RYY} \cite[Theorem 1.2]{RYY2} first understood Proposition \ref{blowup0} from the viewpoint of Deligne--Griffiths--Morgan--Sullivan's criterion  for the $\partial\bar{\partial}$-lemma and S. Yang, X.-D. Yang \cite[Theorem 1.3]{YY} studied it from the viewpoint of Angella--Tomassini's characterization for the case of threefolds.
Shortly, D. Angella, T. Suwa, N. Tardini, A. Tomassini \cite[Theorem 13]{ASTT} also considered it by use of the \u{C}ech-Dolbeault cohomology under some additional assumptions.
Eventually, J. Stelzig  obtianed a blow-up formula for Bott-Chern cohomology and wrote this result out explicitly in  \cite[Corollary 1.40]{St1} \cite[Theorems A and B]{AT}.
\end{rem}

\begin{rem}
S. Rao, S. Yang, X.-D. Yang \cite[Theorem 1.2]{RYY2} gave an isomorphism for blow-up in the inverse direction of $\psi$  as
\begin{displaymath}
\phi:H_{\bar{\partial}}^{\bullet,\bullet}(\widetilde{X})\tilde{\rightarrow}H_{\bar{\partial}}^{\bullet,\bullet}(X)\oplus \bigoplus_{i=1}^{r-1}H_{\bar{\partial}}^{\bullet-i,\bullet-i}(Y),
\end{displaymath}
\begin{displaymath}
\alpha\mapsto(\pi_*\alpha, \beta^1,\ldots,\beta^{r-1}),
\end{displaymath}
where  $i_E^*\alpha=\sum\limits_{i=0}^{r-1}h^i\cup(\pi|_E)^*\beta^{i}$ for unique $\beta^{i}\in H_{\bar{\partial}}^{\bullet-i,\bullet-i}(Y)$, $0\leq i\leq r-1$ and $h=[t]_{\bar{\partial}}\in H_{\bar{\partial}}^{1,1}(E)$. 
Actually, $\phi$ can also be lifted to a morphism between complexes of the spaces of forms and currents, see \cite[Lemma 6.5]{M2}. 
Using this morphism, we can also give the relationship between $V^{p,q}(X)$, $V^{p,q}(Y)$ and $V^{p,q}(\tilde{X})$ by above progress.
\end{rem}

As we know, the exceptional divisor for the blow-up $\widetilde{X}$ of $X$ along $Y$ is biholomorphic to the projective bundle of the normal bundle over $Y$ in $X$.
By Proposition \ref{blowup0} and the following Proposition \ref{Proj-bundle-lemma}, we easily get
\begin{cor}
Let  $\widetilde{X}$ be a blow-up of a  complex manifold $X$ along a smooth center with the exceptional divisor $E$. Then $\widetilde{X}$ is a $\partial\bar{\partial}$-manifold, if and only if, $X$ and $E$ are both  $\partial\bar{\partial}$-manifolds.
\end{cor}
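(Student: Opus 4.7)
The plan is to deduce the corollary by combining the two propositions that have just been established, via the standard identification of the exceptional divisor with a projective bundle over the center.

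First I would fix notation: let $Y \subset X$ denote the smooth center of the blow-up $\pi\colon \widetilde{X} \to X$, with $r = \operatorname{codim}_{\mathbb{C}} Y \geq 2$ (as signalled by the phrase ``exceptional divisor''), and recall the well-known biholomorphism
\begin{equation*}
E \;\cong\; \mathbb{P}(N_{Y/X}),
\end{equation*}
where $N_{Y/X}$ is the holomorphic normal bundle of $Y$ in $X$. This is exactly the hypothesis needed to connect $E$ to $Y$ through Proposition \ref{Proj-bundle-lemma}.

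Next I would run the two-sided implication in two steps. On the one hand, Proposition \ref{blow-up-lemma} applied to the pair $(X,Y)$ gives that $\widetilde{X}$ is a $\partial\overline{\partial}$-manifold if and only if both $X$ and $Y$ are. On the other hand, Proposition \ref{Proj-bundle-lemma} applied to the projective bundle $E \cong \mathbb{P}(N_{Y/X}) \to Y$ gives that $E$ is a $\partial\overline{\partial}$-manifold if and only if $Y$ is. Chaining these two biconditionals yields
\begin{equation*}
\widetilde{X}\text{ is }\partial\overline{\partial} \;\Longleftrightarrow\; \bigl(X\text{ is }\partial\overline{\partial}\text{ and }Y\text{ is }\partial\overline{\partial}\bigr) \;\Longleftrightarrow\; \bigl(X\text{ is }\partial\overline{\partial}\text{ and }E\text{ is }\partial\overline{\partial}\bigr),
\end{equation*}
which is precisely the claimed equivalence.

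There is essentially no obstacle here beyond writing down the identification $E \cong \mathbb{P}(N_{Y/X})$ correctly and verifying that the hypotheses of both propositions are met (connectedness of $Y$ and codimension at least two, both of which are implicit in the statement). Because the heavy lifting—namely the projective bundle formula for Bott-Chern cohomology and the blow-up formula \eqref{BC-blow-up-weak}—was already performed in the proofs of Propositions \ref{Proj-bundle-lemma} and \ref{blow-up-lemma}, the corollary itself is a one-line chain of equivalences, and no further computation with non-K\"ahlerness degrees is needed.
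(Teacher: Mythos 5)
Your proposal is correct and matches the paper's own argument exactly: the paper likewise notes that the exceptional divisor is biholomorphic to the projectivized normal bundle over the center and then combines Propositions \ref{Proj-bundle-lemma} and \ref{blow-up-lemma} to chain the two equivalences. No further comment is needed.
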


\subsection{Projective bundle cases}
Let $\pi:\mathbb{P}(E)\rightarrow X$ be the projective bundle associated to a holomorphic vector bundle $E$ of rank $r$ over a compact complex manifold $X$.
Denote by $t\in \mathcal{A}^{1,1}(\mathbb{P}(E))$ a Chern form of $\mathcal{O}_{\mathbb{P}(E)}(-1)$.
Define a morphism
\begin{displaymath}
\mu=\sum_{i=0}^{r-1}t^i\wedge\pi^*(\bullet) :\bigoplus_{i=0}^{r-1}A^{\bullet-i,\bullet-i}(X)\rightarrow A^{\bullet,\bullet}(\mathbb{P}(E))
\end{displaymath}
of bounded double complexes.
Then $\mu$  induces an isomorphism on $E_1$-pages of the spectral sequences, see \cite[Proposition 3.3]{RYY}, \cite[Proposition 11]{ASTT} or \cite[Corollary 3.2]{M}. With the similar arguments as Sect. 3.1, we can prove following results
\begin{lem}\label{F-H-projbun}
For a given $k$, the Hodge filtration  gives a Hodge structure of weight $k$  on $H^k(\mathbb{P}(E),\mathbb{C})$, if and only if, the Hodge filtration  gives a Hodge structure of weight $k-2i$  on $H^{k-2i}(X,\mathbb{C})$.
\end{lem}

\begin{lem}\label{E-projbun}
The Fr\"{o}licher spectral sequence of $\mathbb{P}(E)$ degenerates at $E_1$, if and only if, so does that of $X$.
\end{lem}

\begin{prop}\label{Proj-bundle-lemma}
Let $\mathbb{P}(E)$ be the projective bundle associated to a holomorphic vector bundle $E$ on a compact complex manifold $X$. Then $\mathbb{P}(E)$ is a $\partial\bar{\partial}$-manifold, if and only if, $X$ is a $\partial\bar{\partial}$-manifold.
\end{prop}

\begin{rem}
The part of `` if " in Proposition \ref{Proj-bundle-lemma} was also proved by D. Angella \emph{et al.} \cite[Corollary 12]{ASTT} in a different way.
\end{rem}

\section{A proof of Theorem \ref{F}}
\begin{proof} Here we just prove $(\textrm{$H_{n+k,k+1}$})\Rightarrow(\textrm{$H_{n}$})$ and the others are the direct corollary of  Proposition \ref{blowup0} and the weak factorization theorem \cite[Theorem 0.3.1]{AKMW}.

Let $X$ be a $\partial\bar{\partial}$-manifold  and $Y$ arbitrary closed complex submanifold of codimension $\geq 1$ in $X$. Note that $X\times \mathbb{C}P^k$ is the projective bundle associated to the trivial bundle $X\times \mathbb{C}^{k+1}$ over $X$ and thus satisfies the $\partial\bar{\partial}$-lemma by Proposition \ref{Proj-bundle-lemma}. Denote by $\{\textrm{pt}\}$ a set consisting of a single point in $\mathbb{C}P^k$. Then $Y\cong Y\times \{\textrm{pt}\}$ has the codimension $\geq k+1$ in $X\times \mathbb{C}P^k$ and  satisfies the $\partial\bar{\partial}$-lemma by $(\textrm{$H_{n+k,k+1}$})$.

%
%
\end{proof}


\end{document}